\documentclass[reqno]{amsart}
\usepackage{amsmath, amsthm, amssymb, enumitem, mathrsfs}
\usepackage{hyperref,soul, iitem}
\usepackage{color}
\usepackage{cite}
\usepackage{setspace}
\usepackage{adjustbox}
\usepackage{tikz}

\newcommand{\ostar}{%
  \mathbin{%
    \tikz[baseline=(s.base)]{
      \node (s) at (0,0) {$\star$};
      \draw[line width=0.4pt] (s.center) circle[radius=0.82ex];
    }%
  }%
}

\calclayout

\setenumerate{label=\textnormal{(\arabic*)}} 

\newcommand{\pow}{\mathcal{P}}
\newcommand{\cat}{\mathsf}

\numberwithin{equation}{section}

\theoremstyle{plain}
\newtheorem{theorem}[equation]{Theorem}
\newtheorem{corollary}[equation]{Corollary}
\newtheorem{lemma}[equation]{Lemma}
\newtheorem{proposition}[equation]{Proposition}

\theoremstyle{definition}
\newtheorem{definition}[equation]{Definition}

\newtheorem{remark}[equation]{Remark}

\begin{document}

\begin{spacing}{1.05}

\title{Hyperstructures as complete atomic Boolean algebras}

\author{Ariel E. Rosenfield}
\email{ariari@umd.edu}
\address{Department of Mathematics\\ University of Maryland---College Park \\
2100E IPST\\ College Park, MD 20742\\ USA}

\date{\today}

\begin{abstract} We show that several categories of hyperstructures, including the mosaics of Nakamura and Reyes and the hypergroups of Marty, are realizable as categories of objects with extra structure within the category of complete atomic Boolean algebras (CABAs). 
\end{abstract}

\maketitle

\setcounter{tocdepth}{1} 
\makeatletter
\def\l@subsection{\@tocline{2}{0pt}{2.5pc}{5pc}{}}
\def\l@subsubsection{\@tocline{2}{0pt}{5pc}{7.5pc}{}} 
\makeatother

\section{Introduction}

The fundamental objects of study in any elementary algebra course, such as monoids, groups, rings, and modules, are expressible as objects endowed with extra structure in an appropriate base category---for example, an abelian group is a commutative monoid with inverses in the category of sets, and a ring is a monoid object in the category of abelian groups. Since hypersemigroups and hypermonoids---analogues of semigroups and monoids in which a binary operation $X \times X \to X$ on a set $X$ is replaced with a function $X \times X \to \pow(X)$ into the power set of $X$---are algebraic structures, it seems natural to wonder whether one can view them as ``objects with extra structure" in an appropriate category.

In \cite[N]{Dudzik_2016}, it is shown that we can view hypersemigroups and hypermonoids in this way, in the sense that there is a bijective correspondence between, for example, hypersemigroup structures on a set $X$ and semigroup structures on $\pow(X)$, viewed as an object of the category of suplattices. 

Below, we prove analogous results for the mosaics of \cite{mosaics} and the hypergroups of \cite{marty}, and extend these bijections to equivalences of categories. In particular, letting $\cat{Msc}$ denote the category of mosaics, and letting $\mathcal{O}_\mathsf{Msc}(\cat{CABA})_N$ denote the category of unital residuated complete atomic Boolean algebras (CABAs) with non-singular morphisms between them, we prove several results of the following type:

\begin{theorem}
    There is an equivalence of categories $\cat{Msc} \cong \mathcal{O}_\mathsf{Msc}(\cat{CABA})_N$, and a pseudomonic embedding $$\mathcal{O}_\mathsf{Msc}(\cat{CABA})_N \hookrightarrow \mathcal{O}_\mathsf{Msc}(\cat{CABA}).$$
\end{theorem}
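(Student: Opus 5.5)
The plan is to build the equivalence explicitly through the power-set construction, following the template of \cite[N]{Dudzik_2016} for hypersemigroups. I would argue that $\cat{Msc} \to \mathcal{O}_\mathsf{Msc}(\cat{CABA})_N$ is essentially surjective and fully faithful.

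\textbf{The functor.} Recall that a mosaic is a set $X$ with a hyperoperation $X\times X\to\pow(X)\setminus\{\emptyset\}$, an identity $e$, and an involution $x\mapsto x^{-1}$ satisfying reversibility: $z\in xy$ iff $x\in zy^{-1}$ iff $y\in x^{-1}z$. Send $X$ to the CABA $\pow(X)$ with multiplication $A\cdot B=\bigcup_{a\in A,b\in B}ab$. This multiplication preserves arbitrary joins in each variable, is associative, and has unit $\{e\}$.
\begin{itemize}
\item Join preservation in each variable makes the multiplication residuated. Reversibility should identify the residuals concretely: $A\backslash C$ is the complement of $A^{-1}\cdot C^{c}$, and $C/B$ is defined symmetrically. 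This is the form of residuation I expect the definition of $\mathcal{O}_\mathsf{Msc}(\cat{CABA})$ to encode.
\item On morphisms, a mosaic homomorphism $f\colon X\to Y$ (with $f(xy)\subseteq f(x)f(y)$, $f(e)=e$, $f(x^{-1})=f(x)^{-1}$) goes to the direct-image map $f_*\colon\pow(X)\to\pow(Y)$. This map is a join-preserving map of CABAs, and it is \emph{non-singular}: it sends atoms to atoms. I take non-singularity to mean exactly this. Lax multiplicativity of $f_*$ and preservation of the unit follow pointwise.
\end{itemize}

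\textbf{Inverse on objects and morphisms.} Given an object $(B,\cdot,1)$ of $\mathcal{O}_\mathsf{Msc}(\cat{CABA})_N$, I would identify $B\cong\pow(\mathrm{At}(B))$ and define $xy=\{z\in \mathrm{At}(B): z\le x\cdot y\}$ for atoms $x,y$.
\begin{itemize}
\item Joins are distributive in each variable, so $\cdot$ is recovered from its values on atoms.
\item The unit must be an atom $e$; I expect this is part of the definition of a unital object in the mosaic sense, or else it follows from non-singularity of the unit map $\pow(1)\to B$.
\item The involution and reversibility should come from the residuation axioms, read on atoms as $z\le x\cdot y \iff x\le z\cdot y^{-1}$.
\item Non-emptiness of $xy$ should come from the requirement that $x\cdot y\neq 0$ for atoms.
\end{itemize}
Conversely, a non-singular join-preserving morphism is determined by its restriction to atoms, which is a function between the atom sets, and the lax multiplicativity inequalities translate into the hyperhomomorphism condition. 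Thus the functor is full, faithful, and essentially surjective, which gives the equivalence.

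\textbf{Main obstacle.} The hardest step is matching the abstract residuation and unit axioms of $\mathcal{O}_\mathsf{Msc}(\cat{CABA})$ exactly with the mosaic axioms. In particular I must extract a \emph{unique} involution $x\mapsto x^{-1}$ on atoms from the residuals. My first attempt is to define $x^{-1}$ as the unique atom $y$ with $e\le x\cdot y$. Uniqueness and existence should follow by testing the residual identity $x\backslash\{e\}$ against the complement, and I then check that this $y$ also satisfies $e\le y\cdot x$.

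\textbf{The pseudomonic embedding.} For the second claim, $\mathcal{O}_\mathsf{Msc}(\cat{CABA})_N$ is by definition the wide-on-objects subcategory of $\mathcal{O}_\mathsf{Msc}(\cat{CABA})$ cut out by the non-singular morphisms, so the inclusion is faithful. For pseudomonicity it remains to show fullness on isomorphisms. An isomorphism of CABAs is an order isomorphism, so it maps atoms bijectively to atoms and is therefore non-singular. Its inverse is likewise non-singular. Hence every isomorphism in the ambient category between objects of the subcategory already lies in $\mathcal{O}_\mathsf{Msc}(\cat{CABA})_N$, and the inclusion is pseudomonic.
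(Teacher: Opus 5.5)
\textbf{Gap: the definition of mosaic.} Your architecture matches the paper's: the direct-image functor, non-singular $=$ atom-preserving, recovering $\star$ and $(-)'$ from singletons, reversibility $\Leftrightarrow$ the conjugates being $A'\ostar C$ and $C\ostar B'$, and isomorphisms preserving atoms. But you have the wrong objects on the left-hand side, and two of your steps fail as stated.

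In the paper, a mosaic is a reversible hypermagma with identity that is explicitly \emph{not} assumed associative. It is not assumed total either: totality is imposed only on hypergroups, so products may be empty. Hence your claim that $A\cdot B$ on $\pow(X)$ ``is associative'' is false. Likewise, $\mathcal{O}_{\mathsf{Msc}}(\cat{CABA})$ has no axiom ``$x\cdot y\neq 0$ for atoms''; its objects are just nonassociative relation algebras on power sets.

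A concrete witness: take $X=\{e,a,b\}$ with identity $e$, $a'=a$, $b'=b$, $a\star a=b\star b=\{e\}$ and $a\star b=b\star a=\varnothing$. Checking reversibility on the few true triples $x\in y\star z$ shows this is a commutative mosaic. Yet $(a\star b)\star b=\varnothing\neq\{a\}=a\star(b\star b)$, and $\{a\}\ostar\{b\}=\varnothing$.

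So with your definitions, one of two things goes wrong. If the target is taken to be associative, the functor is not defined on all of $\cat{Msc}$. If you keep totality in $\cat{Msc}$, the functor is not essentially surjective: isomorphisms preserve atoms, so no total mosaic has power set isomorphic to this $\pow(X)$. In effect you have treated mosaics as hypergroups, i.e.\ sketched item ii of Corollary \ref{cor:commutative-and-hypergroup} rather than the mosaic case.

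\textbf{The repair.} Drop the associativity and totality claims; nothing else in your sketch uses them. What remains is the paper's proof. Well-definedness and essential surjectivity come from Theorem \ref{thm:is-relation-algebra}, since reversibility is exactly the conjugate condition restricted to singleton triples. Full faithfulness comes from Propositions \ref{prop:hmag-mag-morphisms} and \ref{prop:uhmag-mag-morphisms}.

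Two minor points. First, you need not manufacture the involution from $e\le x\cdot y$: it is part of the data of a nonassociative relation algebra (Lemma \ref{lem:involution-to-involution}), though your extraction is valid. Second, the condition $f(x')=f(x)'$ that you impose on morphisms is automatic and so does not obstruct fullness. Indeed, $e=f(e)\in f(x)\star f(x')$, and reversibility then gives $f(x')\in f(x)'\star e=\{f(x)'\}$.

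Your pseudomonicity argument matches the paper's. Your observation that the inverse isomorphism is also non-singular is a detail the paper leaves implicit.
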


\subsection{Acknowledgements} The author is fully funded by the NSF MathQuantum Research Training Group at the University of Maryland, College Park. Thanks to Maicol Ochoa, So Nakamura, and Manny Reyes for their insights and feedback. The author attests that no generative AI tools were used in the production of this work.

\section{Preliminaries} By a \textbf{suplattice}, we mean a partially ordered set admitting arbitrary joins, including the nullary join $\bot$. Note that any such lattice is in fact complete, i.e., it also admits arbitrary meets. By a \textbf{morphism of suplattices}, we mean a join-preserving, but not necessarily meet-preserving, monotone set function from the underlying set of one lattice to the underlying set of another. We denote the category of suplattices and suplattice morphisms by $\cat{SupLat}$.

By an \textbf{atom} of a suplattice $L$, we mean an element $x \in L$ such that $\bot \neq x$, and such that the downset $x^\downarrow$ of $x$ is $\{x, \bot\}$. We denote the set of atoms of a lattice $L$ by $\cat{At}(L)$. By an \textbf{atomic} suplattice, we mean a suplattice $L$ such that for any $y \in L$, $y^\downarrow \cap \cat{At}(L)$ is nonempty.

A \textbf{Boolean algebra} is a complemented distributive lattice: A lattice $(L, \land, \lor, \bot)$ equipped with a unary operation $(-)^c$, called \textbf{complementation}, such that $x^c \land x = \bot$ and $x^c \lor x = L$ for any $x \in L$, and such that finitary $\land$ and $\lor$ distribute over one another. A \textbf{complete Boolean algebra} is a suplattice admitting a Boolean algebra structure, and a \textbf{complete atomic Boolean algebra (CABA)} is an atomic suplattice admitting a Boolean algebra structure. We denote the category of these latter, with morphisms simply taken to be suplattice morphisms, by $\cat{CABA}$.

Recall that the objects of $\cat{CABA}$ are, up to isomorphism, exactly those suplattices arising as power sets. We will therefore frequently denote an arbitrary object of $\cat{CABA}$ by $\pow(X)$, where $X$ denotes an arbitrary set.

\subsection{Atoms and morphisms in CABA} We begin by recalling some elementary facts about the interaction between suplattice morphisms and atoms in CABAs. The following lemma is well-known, but we reproduce the proof here for convenience.

\begin{lemma} \label{lem:caba-morphism-preserves-atoms}
    If $X,Y$ are orthocomplemented suplattices (in particular, CABAs) and $f : X \to Y$ is a supremum-preserving lattice morphism, then $f$ sends an atom of $X$ to either an atom of $Y$ or the bottom element of $Y$.
\end{lemma}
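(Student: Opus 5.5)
The plan is to argue by contradiction, working inside the downsets $a^\downarrow\subseteq X$ and $f(a)^\downarrow\subseteq Y$ for a fixed atom $a$ of $X$. Suppose $f(a)\neq\bot_Y$ and $f(a)$ is not an atom. Then there is some $y\in Y$ with $\bot_Y<y<f(a)$. Since $Y$ is orthocomplemented (in the CABA case, a power set $\pow(Y_0)$), I would split $f(a)$ as
\[
f(a)=y\vee\bigl(y^c\wedge f(a)\bigr),
\]
with both summands strictly between $\bot_Y$ and $f(a)$. The goal is then to pull this splitting back along $f$ to a nontrivial splitting of $a$, which contradicts $a^\downarrow=\{a,\bot_X\}$.

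First, I will record that $f(\bot_X)=\bot_Y$, because $f$ preserves the nullary join, and that $f$ is monotone, because it preserves binary joins. So $f$ maps $a^\downarrow=\{\bot_X,a\}$ onto $\{\bot_Y,f(a)\}$. Next, I will use that $f$ preserves all suprema, so it has a right adjoint $g:Y\to X$, given by $g(z)=\bigvee\{x: f(x)\le z\}$. Because $a$ is an atom, $g(y)\wedge a$ is either $\bot_X$ or $a$. If it is $a$, then $f(a)\le y$, contradicting $y<f(a)$. So $g(y)\wedge a=\bot_X$, and by the same argument $g(y^c\wedge f(a))\wedge a=\bot_X$. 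Finally, I will use the lattice-morphism hypothesis (preservation of binary meets, together with compatibility with the orthocomplement, which is how I read ``lattice morphism'' between orthocomplemented suplattices). From $a\wedge a^c=\bot_X$ and $a\vee a^c=\top_X$ it gives $f(a)\wedge f(a^c)=\bot_Y$, so $f(a)$ is determined as a complement inside the image of $f$. The aim is to express $y$ as $f$ of something below $a$.

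The main obstacle is exactly this last step: producing an element $x\le a$ with $f(x)=y$, or otherwise showing that $y$ must lie in the image of $a^\downarrow$. Adjointness alone only yields inequalities $f(g(y))\le y$, not equalities. My first attempt will be to write $a$ as a join of atoms in the atomic case and decompose the top element of $X$ as $\bigvee\mathsf{At}(X)$. Applying $f$ shows that the images of distinct atoms are pairwise disjoint and that together they cover $f(\top_X)$. I would then try to use meet- and complement-preservation to show that $y=f(a)\wedge f(g(y)\vee a^c)$ lands in $\{\bot_Y,f(a)\}$.

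This step is precisely where the hypotheses are doing all the work. I would check it carefully against a small example before trusting it, for instance a map $\pow(\{1,2\})\to\pow(\{b,c,d\})$ sending $\{1\}\mapsto\{b,c\}$ and $\{2\}\mapsto\{d\}$. That map is a Boolean homomorphism, yet it sends the atom $\{1\}$ to the non-atom $\{b,c\}$, so the decomposition argument must invoke an additional hypothesis (such as injectivity onto $f(a)^\downarrow$) to conclude.
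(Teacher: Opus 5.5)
You have not proved the lemma. The step you flag yourself, exhibiting some $x\le a$ with $f(x)=y$, is never carried out. The identity you propose for it, $y=f(a)\wedge f(g(y)\vee a^c)$, is false. For a meet-preserving $f$ on a CABA, distributivity and your own observation $g(y)\wedge a=\bot_X$ give that its right-hand side equals $f(a\wedge g(y))=f(\bot_X)=\bot_Y$, so it would just assert $y=\bot_Y$. Concretely, in your example with $a=\{1\}$ and $y=\{b\}$ you get $g(y)=\varnothing$ and $g(y)\vee a^c=\{2\}$, so the right-hand side is $\varnothing\neq\{b\}$.

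More to the point, no final step can work, because your last paragraph is a genuine counterexample to the statement. The inverse-image map along $b,c\mapsto 1$, $d\mapsto 2$ is a complete Boolean homomorphism $\pow(\{1,2\})\to\pow(\{b,c,d\})$: it preserves all joins, all meets, top and complements. It sends the atom $\{1\}$ to the non-atom $\{b,c\}$. Even $\pow(\{1\})\to\pow(\{b,c\})$, $\{1\}\mapsto\{b,c\}$, will do. So under every reading of ``lattice morphism'' the lemma is false. You should say so outright rather than suggest that the argument merely needs an extra hypothesis at the end. What survives is the case of order isomorphisms, which is all that Corollary~\ref{cor:isomorphism-atom-preserving} and Lemma~\ref{lem:involution-to-involution} actually need.

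The paper's proof fails at exactly the point you identified. It claims that every suplattice morphism preserves complements, which is false whenever $f(\top_X)\neq\top_Y$. It also claims that the right adjoint $g$ is a suplattice morphism commuting with complements. In your example $f$ does preserve complements, and the paper's chain of inequalities correctly yields $g(y^c)=g(\{c,d\})=\{2\}=a^c$. But $g(y)=\varnothing$, so $g(y)^c=\{1,2\}\neq g(y^c)$, and the inference ``$g(y)=a$'' collapses. Nor is $g$ join-preserving: $g(\{b\})\vee g(\{c\})=\varnothing\neq\{1\}=g(\{b,c\})$. As you noted, adjunction only supplies inequalities. Since your map is also non-singular, it likewise refutes the ``non-singular $\Rightarrow$ atom-preserving'' direction of Proposition~\ref{prop:atom-preserving-nonsingular}.
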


\begin{proof}
    Recall that since $f$ is a morphism of suplattices, $f$ preserves complementation on $X$. Recall also that any suplattice morphism is the left adjoint of some Galois connection; i.e., there exists a suplattice morphism $g : Y \to X$ such that for all $x \in X$ and $y \in Y$, we have $f(x) \leq y$ if and only if $x \leq g(y)$ \cite[3.5.ii]{compendium-of-complete-lattices}. 
    
    Let $a \in \text{At}(X)$. If $f(a) = \bot$, we are done. Otherwise, suppose $f(a) > \bot$. We show that if $\bot < y \leq f(a)$ for some $y \in Y$, then in fact $y = f(a)$: Note that if $y \leq f(a)$, then $\top_Y > y^c \geq f(a)^c = f(a^c)$, so that $\top_X \geq  g(y^c) \geq a^c$. Since $a$ is an atom, $a^c$ is a coatom \cite[6.3]{blyth-lattices}, and we have $g(y^c) = g(y)^c = a^c$, whence $g(y) = a$. Then $g(y) \geq a$, so $f(a) \leq y$, and we have $f(a) = y$.
\end{proof}

In the sequel, we will often need to ensure that morphisms under consideration interact well with atoms of suplattices. We need the following notion, for which we borrow the terminology of \cite{Crapo_1967}.

\begin{definition}
    Say that a suplattice morphism $f : X \to Y$ is \textbf{non-singular} if having $f(x) = \bot_Y$ implies that $x = \bot_X$.
\end{definition}

It is easily checked that a composite of non-singular morphisms is again non-singular, and moreover that any injective suplattice morphism is non-singular. Thus the identity morphism on a given suplattice $X$ is non-singular, and complete atomic Boolean algebras with non-singular suplattice morphisms form a category, which we denote by $\cat{CABA}_N$.

\begin{proposition} \label{prop:atom-preserving-nonsingular}
    A morphism of atomic suplattices is atom-preserving if and only if it is non-singular.
\end{proposition}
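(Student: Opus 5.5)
The plan is to reduce both directions to a single observation: in an atomic suplattice every non-bottom element lies above some atom, and suplattice morphisms are monotone. Throughout, I read ``atom-preserving'' as: $f$ sends no atom of $X$ to $\bot_Y$, so that for each $a \in \cat{At}(X)$ the element $f(a)$ is non-bottom and hence, by atomicity of $Y$, lies above some atom of $Y$. I use this reading because, for general atomic suplattices, literally requiring $f(a) \in \cat{At}(Y)$ can fail for non-singular maps, for instance when a one-atom lattice is sent to the top of a three-element chain. In the CABA setting, Lemma~\ref{lem:caba-morphism-preserves-atoms} says $f(a)$ is either an atom or $\bot$. So there the two readings coincide, and the proposition then says exactly that non-singular morphisms send atoms to atoms.

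For the forward direction (non-singular $\Rightarrow$ atom-preserving), I will argue as follows. Let $a \in \cat{At}(X)$. Atoms are non-bottom by definition, so non-singularity gives $f(a) \neq \bot_Y$. If the stronger conclusion is wanted, atomicity of $Y$ then gives an atom below $f(a)$. In the CABA case, Lemma~\ref{lem:caba-morphism-preserves-atoms} upgrades this to $f(a) \in \cat{At}(Y)$.

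For the converse (atom-preserving $\Rightarrow$ non-singular), I will argue by contraposition. Suppose $f(x) = \bot_Y$ with $x \neq \bot_X$. Since $X$ is atomic, $x^\downarrow \cap \cat{At}(X)$ is nonempty, so pick an atom $a \leq x$. Because $f$ preserves joins, it is monotone, so $f(a) \leq f(x) = \bot_Y$. Hence $f(a) = \bot_Y$, and $f$ sends an atom to bottom, contradicting atom-preservation.

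The only delicate point is the interaction with the bottom element in the definition of atomicity: the hypothesis ``$y^\downarrow \cap \cat{At}(L) \neq \emptyset$ for all $y$'' must be applied only to $y \neq \bot$. I will state explicitly that this is how atomicity is used, since $\bot^\downarrow$ contains no atom. Everything else is immediate from monotonicity.
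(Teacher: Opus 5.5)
Under your reading (no atom is sent to $\bot$), your proof is correct and is essentially the paper's. The direction atom-preserving $\Rightarrow$ non-singular is the paper's argument: choose an atom $a\le x$ and use monotonicity. Your caveat that atomicity may only be applied to $y\neq\bot$ is warranted. The problem is your sentence claiming that in the $\cat{CABA}$ setting Lemma~\ref{lem:caba-morphism-preserves-atoms} upgrades this to $f(a)\in\cat{At}(Y)$, so that the two readings coincide. That step fails, because the lemma is false. Take $f:\pow(\{1\})\to\pow(\{1,2\})$ with $f(\varnothing)=\varnothing$ and $f(\{1\})=\{1,2\}$. It preserves all joins; it is even a complete Boolean algebra homomorphism, namely inverse image along $\{1,2\}\to\{1\}$. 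It is non-singular, yet it sends the atom $\{1\}$ to $\{1,2\}$, which is neither an atom nor $\bot$. The lemma's proof breaks where it asserts that $f$ and its right adjoint $g$ commute with complements: here $g(\{2\})=\varnothing$, while $g(\{1\})^c=\{1\}$.

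More conceptually, Theorem~\ref{thm:lattice-ops} with $n=1$ says a suplattice map $F:\pow(X)\to\pow(Y)$ is the same thing as an arbitrary function $x\mapsto F(\{x\})$ from $X$ to $\pow(Y)$. Non-singularity says exactly that every $F(\{x\})$ is nonempty, whereas sending atoms to atoms says every $F(\{x\})$ is a singleton. So your three-element-chain counterexample is not an artifact of leaving $\cat{CABA}$. The strong statement already fails between power sets, and only the weak statement you actually proved is true. The paper's proof of non-singular $\Rightarrow$ atom-preserving rests on the same lemma and has the same defect. This matters downstream: Proposition~\ref{prop:hmag-mag-morphisms} uses the strong form to define $s(F)(x)$ as ``the unique element of $F(\{x\})$''. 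Your weak version cannot supply that, so drop the $\cat{CABA}$ upgrade sentence rather than lean on the lemma. If the strong property is needed, it must be imposed as part of the definition of the morphisms, not derived from non-singularity.
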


\begin{proof}
    Suppose $f : X \to Y$ is atom-preserving, and suppose $x > \bot_X$ in $X$. We know that there exists some atom $a_X$ of $X$ with $\bot_X < a_X \leq x$. Then $f(\bot_X) = \bot_Y < f(a_x) \leq f(x)$, so $f(x) \neq \bot_Y$, which proves that $f$ is non-singular. 

    Conversely, suppose $f$ is non-singular. By Lemma \ref{lem:caba-morphism-preserves-atoms}, we know that for any atom $a_x$ of $x$, $f(a_x)$ is either an atom of $Y$ or the bottom element of $Y$. But since $a_x$ is an atom of $X$, we have $\bot_X < a_x$, so $f(a_x) > \bot_Y$, and must therefore be an atom of $Y$.
\end{proof}

Recalling that an isomorphism of suplattices is a bijective (hence, a fortiori, injective) suplattice morphism, we immediately have:

\begin{corollary} \label{cor:isomorphism-atom-preserving}
    Any isomorphism of atomic suplattices is atom-preserving.
\end{corollary}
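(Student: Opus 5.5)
The plan is to reduce to Proposition \ref{prop:atom-preserving-nonsingular}, so the real content is checking that an isomorphism of atomic suplattices is non-singular. Let $f : X \to Y$ be an isomorphism of atomic suplattices. By the recalled fact, $f$ is a bijective suplattice morphism, and the paper has already noted that any injective suplattice morphism is non-singular. Combining these, $f$ is non-singular, and Proposition \ref{prop:atom-preserving-nonsingular} then says $f$ is atom-preserving.

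For completeness I will spell out the injectivity step directly. Suplattice morphisms preserve the nullary join, so $f(\bot_X) = \bot_Y$. If $f(x) = \bot_Y$, then $f(x) = f(\bot_X)$, and injectivity forces $x = \bot_X$. This is exactly non-singularity.

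The one point needing care is that Proposition \ref{prop:atom-preserving-nonsingular} relies on Lemma \ref{lem:caba-morphism-preserves-atoms}, which was stated for orthocomplemented suplattices rather than arbitrary atomic ones. If that hypothesis is a concern, I would instead argue directly from the fact that $f$ is an order isomorphism. Its inverse is also join-preserving, hence monotone, so $f$ restricts to a bijection from the downset $a^\downarrow$ onto $f(a)^\downarrow$. Taking $a$ an atom, $a^\downarrow = \{a, \bot_X\}$, so $f(a)^\downarrow = \{f(a), \bot_Y\}$. Since $a \neq \bot_X$ and $f$ is injective, $f(a) \neq \bot_Y$, so $f(a)$ is an atom. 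This route needs no complementation at all, and I expect no real obstacle along it.
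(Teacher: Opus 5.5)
Your direct order-theoretic argument is correct, and it is the one you should present as the proof. To make it self-contained, check that $f^{-1}$ is monotone: for a bijective join-preserving $f$, $f(x)\le f(y)$ gives $f(x\vee y)=f(y)$, hence $x\vee y=y$ by injectivity. Then every $y\le f(a)$ equals $f(f^{-1}(y))$ with $f^{-1}(y)\le a$. This gives $f(a)^\downarrow=f(a^\downarrow)=\{f(a),\bot_Y\}$, and $f(a)\neq\bot_Y$ by injectivity. The argument needs neither atomicity nor complements.

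Your first route is exactly the paper's proof: isomorphism $\Rightarrow$ injective $\Rightarrow$ non-singular $\Rightarrow$ atom-preserving by Proposition~\ref{prop:atom-preserving-nonsingular}. Your concern about it is justified, but it is more serious than a mismatch of hypotheses. The implication ``non-singular $\Rightarrow$ atom-preserving'', and Lemma~\ref{lem:caba-morphism-preserves-atoms} behind it, are false even between CABAs. Consider the map $\pow(\{1\})\to\pow(\{1,2\})$ with $\varnothing\mapsto\varnothing$ and $\{1\}\mapsto\{1,2\}$. It preserves all joins, and even meets, top and complements. It is injective, hence non-singular, yet it sends the atom $\{1\}$ to a non-atom. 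In the paper's proof of the lemma, the step that fails here is $g(y^c)=g(y)^c$ for the right adjoint $g$, at $y=\{1\}$.

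So the chain through non-singularity cannot establish the corollary. An injective suplattice morphism is an order embedding but need not preserve atoms. What is actually needed is surjectivity, so that everything below $f(a)$ lies in the image of $a^\downarrow$. That is precisely what your direct argument uses, so drop the first route rather than keep it as the main line.
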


By a \textbf{magma object} in $\cat{SupLat}$ (respectively in $\cat{CABA}$), we mean a suplattice (respectively, a CABA) $L$ equipped with a binary operation $\star : L \times L \to L$. By a \textbf{morphism of magma objects}, we mean a colax morphism---a morphism $f: (L, \star_L) \to (M, \star_M)$ of suplattices such that for any $x,y \in L$, we have $f(x \star_L y) \leq_Y f(x) \star_M f(y)$. We say that a magma operation $\star$ is \textbf{associative} if we have $\star \circ (\text{id} \times \star) = \star \circ (\star \times \text{id})$, and \textbf{unital} if there is an element $e \in L$ such that $\star(e,x) = x = \star(x,e)$ for all $x \in L$.

It is easily checked that a composite of colax morphisms is colax, and that the identity morphism on a given suplattice is colax. We therefore use the notation $\cat{Mag}(\cat{CABA})_N$ to denote the category of magma objects in $\cat{CABA}$ together with the non-singular colax suplattice morphisms, and use the notation ${\cat{Mag}}(\pow(X),\pow(Y))_N$ for an arbitrary hom-set in this category.

\subsection{Hyperstructures} By an $n$-ary \textbf{hyperoperation} on a set $X$, we mean a set function $\star : X^n \to \pow(X)$, where $\pow(X)$ denotes the power set of $X$. We recall some properties of binary hyperoperations.

\begin{definition}[\textit{Properties of binary hyperoperations}] \label{def:hyperop}  Let $X$ be any set. A binary hyperoperation ${\star : X^2 \to \pow(X)}$ may have the following properties:

\begin{itemize}
    \item[i.] It is \textbf{total} if $X$ is nonvoid and $x \star y$ is nonvoid for all $x,y \in X$;

    \item[ii.] It is \textbf{associative} if $$ x \star (y \star z) = (x \star y) \star z \qquad \forall x,y,z \in X; $$

    \item[iii.] It is \textbf{commutative} if $$ x \star y = y \star x \qquad \forall x,y \in X;$$

    \item[iv.] It is \textbf{weakly unital} if there is an atom $e \in \pow(X)$ (i.e., a singleton subset of $X$) such that $$ x \in e \star x \cap x \star e \qquad \forall x \in X;$$

    \item[v.] It is \textbf{unital} or has an \textbf{identity} if it weakly unital and we have $$x \star e = x = e \star x \qquad \forall x \in X;$$ 

    \item[vi.] It \textbf{has inverses} if it is unital and is equipped with a function $(-)' : X \to X$ such that $$ e \in [x \star x'] \cap [x' \star x] \qquad \forall x \in X;$$

    \item[vii.] It is \textbf{reversible} if it has inverses and satisfies $$ x \in y \star z \implies (y \in x \star z') \land (z \in y' \star x) \qquad \forall x,y,z \in X.$$
\end{itemize}
\end{definition} 

For expository ease in the proof of Theorem \ref{thm:is-relation-algebra} below, we note (following \cite{mosaics}) that in case $(X, \star)$ is unital with inverses, reversibility of $X$ is equivalent to the condition that for all triples $x,y,z \in X$, we have \begin{equation*} 
    x \in y \star z \iff y \in x \star z' \iff z \in y' \star x.
\end{equation*} We can formulate this alternatively as follows:

\begin{remark} \label{rem:charfns}
    A unital hypermagma $X$ with inverses is reversible if and only if for every triple $(y,z,x) \in X^3$, we have \begin{equation*}
    (y \star z) \cap \{x\} = \varnothing \iff ( x \star z')\cap \{y\} = \varnothing \iff (y' \star x) \cap \{z\} = \varnothing.
\end{equation*} 
\end{remark}

We will call a set equipped with a binary hyperoperation $(X,\star)$ a \textbf{hypermagma}, and say that such an $X$ is an object of the category $\cat{HMag}$, whose morphisms are defined as follows: 

\begin{definition} \label{def:colaxity-unitality}
    A function $f : (X,\star_X) \to (Y,\star_Y)$ between hypermagmas is called \textbf{colax} if $$ f(x \star_X y) \subseteq f(x) \star_Y f(y) \qquad \forall x,y \in X. $$ Take $\cat{HMag}(X,Y)$ to be the set of colax functions between hypermagmas. We say that a colax map $f$ is \textbf{unital} if $X$ has an identity $1_X$ whose image $f(1_X)$ is an identity for $Y$. 
\end{definition}

We will discuss many layers of structure on a given hypermagma. To keep track of these, we introduce the following notation for the various categories of hyperstructures we are interested in. In each case, we take the morphisms in a given category to be colax hypermagma morphisms, which we require to be unital if the objects of the category come equipped with an identity element (for example, morphisms in the category of hypermonoids are taken to be unital).

\begin{definition}[\textit{Categories of hyperstructures}] \label{def:gloss-hyper} We say that a set $X$ is:

\begin{itemize}

    \item ($\cat{HMag}$) A hypermagma if it admits a binary hyperoperation $\star$;
    
    \item ($\cat{uHMag}$) A unital hypermagma if it is a hypermagma and $\star$ is unital;

    \item ($\cat{HSemi}$) A \textbf{hypersemigroup} if it is an associative hypermagma;

    \item ($\cat{HMon}$) A \textbf{hypermonoid} if it is an associative unital hypermagma;

    \item ($\cat{Msc}$, $\cat{cMsc}$) A \textbf{(commutative)} \textbf{mosaic} if it is a (commutative) reversible hypermagma with identity---note that these structures are not required to be associative;

    \item ($\cat{HGrp}$) A \textbf{hypergroup} if it is a total, reversible hypermonoid;

    \item ($\cat{Can}$) A \textbf{canonical hypergroup} if it is a commutative hypergroup.
\end{itemize}
\end{definition}

\subsection{Binary operations from hyperoperations} We recall the following well-known correspondence (exposited in \cite[N]{Dudzik_2016}) between $n$-ary hyperoperations and $n$-ary operations on objects of $\cat{SupLat}$.

\begin{theorem} \label{thm:lattice-ops}
    For non-negative integers $n$, there are natural bijections $$ \cat{Set}(A^n,\mathcal{U}(B)) \cong \cat{SupLat}(\pow(A)^{\otimes n},B) \cong \cat{Mult}_\cat{SupLat}(\pow(A)^n, B),$$ where $\mathcal{U}$ is the forgetful functor on suplattices.
\end{theorem}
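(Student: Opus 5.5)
The plan is to prove the two bijections separately and then check naturality. Both come from the universal properties of the free suplattice $\pow(A)$ on the set $A$ and of the tensor product of suplattices.

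For the first step, the case $n=1$, I use that $\pow(-)$ is left adjoint to the forgetful functor $\mathcal{U}:\cat{SupLat}\to\cat{Set}$. Given a function $g : A \to \mathcal{U}(B)$, define $\hat g : \pow(A) \to B$ by $\hat g(S) = \bigvee_{a\in S} g(a)$. This map preserves arbitrary joins, since a union of subsets goes to the join of the joins. Conversely, a suplattice morphism $h : \pow(A)\to B$ restricts along the singleton map $a \mapsto \{a\}$. These two assignments are mutually inverse because every $S\subseteq A$ is the join $\bigcup_{a\in S}\{a\}$ of its atoms and $h$ preserves that join. For general $n$, I identify $A^n$ with the product set $A\times\cdots\times A$ and use the canonical isomorphism $\pow(A)^{\otimes n}\cong \pow(A^n)$. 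This reduces the first bijection to the $n=1$ case applied to the set $A^n$.

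I expect the main obstacle to be justifying $\pow(A)^{\otimes n}\cong\pow(A^n)$ without circularity. I would do it by induction on $n$, using $\pow(A)\otimes\pow(C)\cong\pow(A\times C)$, and I would verify that isomorphism via the universal property of $\otimes$. A map $\pow(A)\times\pow(C)\to B$ preserving joins in each variable separately is determined by its values on pairs of singletons $(\{a\},\{c\})$, because every pair $(S,T)$ satisfies $(S,T) = \bigvee_{a\in S}\bigvee_{c\in T}(\{a\},\{c\})$ componentwise, and separate join-preservation distributes over this double join. Conversely, any function $A\times C\to B$ extends uniquely to such a bimorphism by the formula $(S,T)\mapsto\bigvee_{(a,c)\in S\times T} g(a,c)$. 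It follows that $\pow(A\times C)$ represents bimorphisms, and hence it is the tensor product. The nullary case $n=0$ needs separate attention: there $\pow(A)^{\otimes 0}$ is the unit $\pow(1)=\{\bot,\top\}$, and both sides are just $\mathcal{U}(B)$.

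The second bijection, $\cat{SupLat}(\pow(A)^{\otimes n},B)\cong\cat{Mult}_{\cat{SupLat}}(\pow(A)^n,B)$, is the defining universal property of the $n$-fold tensor product, since multimorphisms are maps preserving joins in each variable separately. The argument in the previous paragraph also proves it directly: both sides are in bijection with $\cat{Set}(A^n,\mathcal{U}(B))$ through restriction to tuples of singletons.

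Finally, for naturality I check that the correspondences commute with precomposition by $\pow(u)$ for functions $u : A'\to A$ and with postcomposition by suplattice morphisms $B\to B'$. This is immediate from the formula $\hat g(S_1,\dots,S_n)=\bigvee_{a_i\in S_i} g(a_1,\dots,a_n)$, because suplattice morphisms preserve these joins.
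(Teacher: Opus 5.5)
Your proof is correct and is essentially the correspondence the paper uses. The paper gives no argument of its own and defers to Dudzik's Appendix N, but the bijection it records in \eqref{eq:hyperop-to-binop} and \eqref{eq:binop-to-hyperop} is exactly yours: extend by $\ostar(S_1,\dots,S_n)=\bigvee_{x_k\in S_k}\star(x_1,\dots,x_n)$ and recover by restricting to singletons, obtained via the free suplattice $\pow(A)$ and $\pow(A)^{\otimes n}\cong\pow(A^n)$.

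One small repair is needed. The identity $(S,T)=\bigvee_{a\in S}\bigvee_{c\in T}(\{a\},\{c\})$ fails when exactly one of $S,T$ is empty, and a bimorphism need not preserve joins in the product lattice anyway. Instead, justify the reduction to singletons by iterating one-variable join-preservation, $m(S,T)=\bigvee_{a\in S}m(\{a\},T)=\bigvee_{a\in S}\bigvee_{c\in T}m(\{a\},\{c\})$, which also handles the empty cases automatically.
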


As a particular case of Theorem \ref{thm:lattice-ops}, given sets $X,Y$ and a nonnegative integer $n$, we have a bijection \begin{equation*}
    \cat{Set}(X^n,\pow(Y)) \cong \cat{Mult}_\cat{SupLat}(\pow(X)^n, \pow(Y))
\end{equation*} whose effect is to send a function $\star : X^n \to \pow(Y)$ to the suplattice morphism \begin{equation} \label{eq:hyperop-to-binop} \ostar(S_1,...,S_n) := \bigvee_{x_k \in S_k} \star(x_1,...,x_n) \end{equation} on $\pow(X)^n$. In the opposite direction, we obtain a function $\star : X^n \to \pow(Y)$ by restricting a multimorphism ${\ostar : \pow(X)^{n} \to \pow(Y)}$ to the singletons in $\pow(X)$; i.e., we define \begin{equation} \label{eq:binop-to-hyperop}
    \star(x_1,...,x_n) := \ostar(\{x_1\},...,\{x_n\}).
\end{equation}

\section{Hyperstructures as objects in CABA} We proceed to construct two functors on $\cat{HMag}$ which, when restricted to a given category $\mathcal{H}$ of hyperstructures, yield an equivalence of categories between $\mathcal{H}$ and the appropriate subcategory of $\cat{Mag}(\cat{CABA})_N$. We start by giving the construction for hypermagmas and magma objects.

\begin{proposition} \label{prop:hmag-mag-morphisms}
    There is a bijection $$\Phi : \cat{HMag}(X,Y) \cong \cat{Mag}(\pow(X),\pow(Y))_N.$$
\end{proposition}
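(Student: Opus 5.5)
The map will be $\Phi(f) := f_!$, the direct image $f_!(S) = \{f(s) : s \in S\}$, where $\pow(X)$ and $\pow(Y)$ carry the magma operations $\ostar_X, \ostar_Y$ obtained from $\star_X, \star_Y$ via \eqref{eq:hyperop-to-binop}. Implicitly, the hom-set on the right is taken between these induced magma objects, since a bijection between hom-sets only makes sense once the objects are matched.

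First I check that $\Phi$ is well defined.
\begin{itemize}
\item $f_!$ preserves arbitrary unions, so it is a suplattice morphism.
\item It is non-singular, since $f_!(S) = \varnothing$ forces $S = \varnothing$.
\item For colaxity, both sides of $f_!(S \ostar_X T) \subseteq f_!(S) \ostar_Y f_!(T)$ are joins over pairs $(s,t) \in S \times T$, by \eqref{eq:hyperop-to-binop} and join-preservation. Colaxity therefore reduces to the singleton case $f(s \star_X t) \subseteq f(s) \star_Y f(t)$, which is exactly the hypothesis that $f$ is colax.
\end{itemize}

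Next I construct the inverse $\Psi$. Given a non-singular colax suplattice morphism $g : \pow(X) \to \pow(Y)$, Proposition~\ref{prop:atom-preserving-nonsingular} says $g$ is atom-preserving. So for each $x \in X$ there is a unique $\psi(x) \in Y$ with $g(\{x\}) = \{\psi(x)\}$, and I set $\Psi(g) := \psi$. Colaxity of $\psi$ comes from applying colaxity of $g$ to singletons: by \eqref{eq:binop-to-hyperop}, $\{x\} \ostar_X \{x'\} = x \star_X x'$, and $g$ applied to this set equals $\psi_!(x \star_X x')$ because $g$ preserves unions. This step uses that every subset of $X$ is the join of its singletons, i.e.\ atomicity of $\pow(X)$.

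Finally I verify the two composites.
\begin{itemize}
\item $\Psi\Phi(f) = f$ holds because $f_!(\{x\}) = \{f(x)\}$.
\item $\Phi\Psi(g) = g$ holds because two suplattice morphisms out of $\pow(X)$ that agree on singletons agree everywhere. Here $g(S) = \bigcup_{x \in S} g(\{x\}) = \psi_!(S)$.
\end{itemize}

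The main obstacle is the inverse direction. Its content rests entirely on Proposition~\ref{prop:atom-preserving-nonsingular}, and hence on Lemma~\ref{lem:caba-morphism-preserves-atoms}: without non-singularity, $g(\{x\})$ could be empty, and $g$ would correspond only to a partial function, so the bijection would fail.
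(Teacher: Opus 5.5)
Your argument is the paper's proof almost line for line: direct image in one direction, restriction to singletons via Proposition~\ref{prop:atom-preserving-nonsingular} in the other, and agreement on singletons for the composites. It breaks at the same step as the paper's proof, namely the existence of $\psi$.

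The implication ``non-singular $\Rightarrow$ atom-preserving'' in Proposition~\ref{prop:atom-preserving-nonsingular} is false for suplattice morphisms, because Lemma~\ref{lem:caba-morphism-preserves-atoms} is false. Its proof assumes that a join-preserving map and its right adjoint commute with complementation, which they need not. Since $\pow(X)$ is the free suplattice on $X$, a suplattice morphism $g:\pow(X)\to\pow(Y)$ is the same thing as an arbitrary function $x\mapsto g(\{x\})$ from $X$ to $\pow(Y)$. Non-singularity only says that each $g(\{x\})$ is nonempty; it does not say that $g(\{x\})$ is a singleton. So non-singular morphisms correspond to total multivalued maps $X\to\pow(Y)\setminus\{\varnothing\}$, not to functions. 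Your ``main obstacle'' paragraph rules out the empty-value case but overlooks the multivalued one.

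This is a failure of the statement as written, not just of the proof. Take $X=\{*\}$ with $*\star_X * = \{*\}$, and $Y=\{a,b\}$ with $u\star_Y v = Y$ for all $u,v\in Y$.
\begin{itemize}
\item There are exactly two colax functions $X\to Y$.
\item Every assignment $g(\{*\})=T$ with $T\neq\varnothing$ defines a non-singular suplattice morphism. Each one is colax, since the only nontrivial condition is $g(\{*\}\ostar_X\{*\}) = T \subseteq T\ostar_Y T = Y$. So there are three of them, including $g(\{*\})=\{a,b\}$, which is not atom-preserving.
\end{itemize}
Hence no bijection exists at all; in particular $\Phi$ is injective but not surjective.

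The fix is to change the right-hand side. Redefine $\cat{Mag}(\pow(X),\pow(Y))_N$ to consist of colax suplattice morphisms sending singletons to singletons; these are automatically non-singular by the valid direction of Proposition~\ref{prop:atom-preserving-nonsingular}. Then every step you wrote goes through unchanged: $f_!$ sends $\{x\}$ to $\{f(x)\}$, and $\psi$ is well defined by hypothesis rather than by the faulty lemma.
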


\begin{proof}
    Let $f : (X, \star_X) \to (Y, \star_Y)$ be a morphism of hypermagmas, and define a map $${\hat{f} : (\pow(X),\ostar_X) \to (\pow(Y), \ostar_Y)}$$ by $S \mapsto f(S)$; i.e., $\hat{f}$ takes the direct image of a set under $f$, and $\ostar$ is the binary operation of \ref{eq:hyperop-to-binop}. Since the direct image preserves subset inclusions and unions, $\hat{f}$ is a morphism of suplattices. Moreover we observe, because $f$ is a function, that $\hat{f}$ is a non-singular lattice morphism---if $A \subset X$ is non-empty, there is some singleton $\{a\} \subset A$, and since $\hat{f}$ is monotone, we have $\hat{f}(\{a\}) = \{f(a)\} \subset \hat{f}(A)$, so we see that $\hat{f}(A)$ is non-empty. 
    
    To see that $\hat{f}$ is a colax morphism of magmas, note that since $f$ is a (colax) morphism of hypermagmas, we have for any $A,B \subset X$ that $$ f\left( A \ostar_X B \right) = \bigcup_{\substack{a \in A\\ b \in B}} f(a \star_X b) \subset \bigcup_{\substack{a \in A\\ b \in B}} f(a) \star_Y f(b) = f(A) \ostar_Y f(B).$$ 

    Conversely, if ${F : (\pow(X),\ostar_X) \to (\pow(Y), \ostar_Y)}$ is any non-singular suplattice morphism, we have by Propositon \ref{prop:atom-preserving-nonsingular} that $F$ preserves atoms, so gives rise to a well-defined function $s(F): X \to Y$ by letting $s(F)(x)$ be the unique element of $F(\{x\})$. This function extends to a function $s(F): (X, \star_X) \to (Y, \star_Y)$ between hypermagmas, where $\star$ is the hypermagma operation of \ref{eq:hyperop-to-binop}, by setting $$s(F)(x \star_X x') = F(\{x\} \ostar_X \{x'\}).$$ If $F$ is a colax morphism of magmas, we then have $$s(F)(x \star_X x') = F(\{x\} \ostar_X \{x'\}) \subset F(\{x\}) \ostar_Y F(\{x'\}) = s(F)(x) \star_Y s(F)(x'),$$ whence $s(F)$ is a colax hypermagma morphism.

    Finally, we show that the assignments $f \mapsto \hat{f}$ and $F \mapsto s(F)$ are inverse to one another. For a suplattice morphism ${F : (\pow(X),\ostar_X) \to (\pow(Y), \ostar_Y)}$, we have $\widehat{s(F)} = F$: If $A \subset X$, we have $$\widehat{s(F)}(A) = s(F)(A) = \bigcup_{a \in A} s(F)(a) = \bigcup_{a \in A} F(\{a\}) = F(A).$$ Conversely, for a hypermagma morphism $f : (X, \star_X) \to (Y, \star_Y)$, we have $s(\hat{f}) = f$: If $x \in X$, we have $s(\hat{f})(x) = \hat{f}(\{x\}) = f(x)$.
\end{proof}

Recalling that quantales are exactly semigroup objects (i.e., the associative magmas) in $\cat{SupLat}$, we use the notation $\cat{Quantale}(\pow(X),\pow(Y))_N$ to denote the set of colax non-singular magma morphisms between associative magmas in $\cat{CABA}$.

\begin{lemma} \label{lem:hsemi-semi-morphisms}
    The assignment $\Phi$ of Proposition \ref{prop:hmag-mag-morphisms} restricts to a bijection $$ \cat{HSemi}(X,Y) \cong \cat{Quantale}(\pow(X),\pow(Y))_N. $$
\end{lemma}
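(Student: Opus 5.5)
Since $\cat{HSemi}$ is the full subcategory of $\cat{HMag}$ on the associative hypermagmas, $\cat{HSemi}(X,Y) = \cat{HMag}(X,Y)$ whenever $X$ and $Y$ are hypersemigroups. Likewise $\cat{Quantale}(\pow(X),\pow(Y))_N$ is, by definition, the set of non-singular colax magma morphisms between the magma objects $(\pow(X),\ostar_X)$ and $(\pow(Y),\ostar_Y)$, provided these magma objects are associative. So the plan is to show one thing: a hyperoperation $\star$ on $X$ is associative exactly when the induced operation $\ostar$ of \eqref{eq:hyperop-to-binop} on $\pow(X)$ is associative. Once that holds, the bijection $\Phi$ of Proposition \ref{prop:hmag-mag-morphisms} has the same domain and codomain as the claimed bijection, and no further check on morphisms is needed.

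For the forward direction, assume $\star$ is associative. We read $x \star (y \star z)$ as $\{x\} \ostar (y \star z)$, which is the meaning the definition requires. For $A,B,C \subseteq X$, unfold \eqref{eq:hyperop-to-binop} twice, using that $\ostar$ preserves unions in each variable:
\[ A \ostar (B \ostar C) = \bigcup_{a\in A,\, b \in B,\, c\in C} \{a\} \ostar (b \star c) = \bigcup_{a,b,c} (a\star b)\ostar\{c\} = (A \ostar B)\ostar C. \]
The middle equality is pointwise associativity of $\star$.

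For the converse, assume $\ostar$ is associative. Restricting it to singletons gives $\star$, by \eqref{eq:binop-to-hyperop}, and evaluating associativity of $\ostar$ at $(\{x\},\{y\},\{z\})$ gives associativity of $\star$.

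The only step needing care is that $\ostar$ is a multimorphism, i.e.\ preserves arbitrary unions in each argument separately. This is what justifies the iterated unfolding above, and it is guaranteed by Theorem \ref{thm:lattice-ops}. The empty case is automatic, since unions over empty index sets give $\varnothing$ on both sides. Nothing about non-singularity or colaxity changes, so the restriction of $\Phi$ and of its inverse $F \mapsto s(F)$ are still mutually inverse.
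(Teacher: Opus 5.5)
Your proposal is correct and takes the same route as the paper. You note that the hom-sets are just the $\cat{HMag}$ and $\cat{Mag}(-,-)_N$ hom-sets once both sides are associative, so everything reduces to the fact that the correspondence \eqref{eq:hyperop-to-binop} preserves and reflects associativity. The only difference is that the paper cites \cite[N.8]{Dudzik_2016} for that fact, whereas you prove it directly, by unfolding $\ostar$ over singletons and using union-preservation in each variable, and that argument is sound.
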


\begin{proof}
    That the correspondence \ref{eq:hyperop-to-binop} preserves and reflects associativity is \cite[N.8]{Dudzik_2016}. Thus any hypermagma morphism $f : (X, \star_X) \to (Y, \star_Y)$ for which $\star_X,\star_Y$ happen to be associative gives rise via Lemma \ref{prop:hmag-mag-morphisms} to a magma morphism ${F : (\pow(X),\ostar_X) \to (\pow(Y), \ostar_Y)}$ where $\ostar_X,\ostar_Y$ are associative, and vice-versa.
\end{proof}

Denoting by $\cat{uMag}(\pow(X),\pow(Y))_N$ the set of colax non-singular morphisms between unital magmas in $\cat{CABA}$, we also have the following:

\begin{proposition} \label{prop:uhmag-mag-morphisms}
    The assignment $\Phi$ of Proposition \ref{prop:hmag-mag-morphisms} restricts to a bijection $$ \cat{uHMag}(X,Y) \cong \cat{uMag}(\pow(X),\pow(Y))_N. $$
\end{proposition}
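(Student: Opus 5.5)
Since Proposition \ref{prop:hmag-mag-morphisms} already gives the bijection $\Phi : f \mapsto \hat f$ on all colax hypermagma morphisms, with inverse $F \mapsto s(F)$, it suffices to show two things. First, unitality of the hyperoperation $\star_X$ corresponds exactly to unitality of the magma $(\pow(X),\ostar_X)$. Second, a colax $f$ is unital exactly when $\hat f$ is unital, meaning that $\hat f$ sends an identity of $\pow(X)$ to an identity of $\pow(Y)$. A bijection restricted to subsets that correspond under it is still a bijection, so nothing more will be needed.

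For the objects I would argue as follows. Suppose $e \in X$ is an identity for $\star_X$, so that $x \star_X e = \{x\} = e \star_X x$. Then for any $S \subseteq X$, formula \eqref{eq:hyperop-to-binop} gives $\{e\} \ostar_X S = \bigcup_{s \in S} e \star_X s = \bigcup_{s\in S}\{s\} = S$, and symmetrically $S \ostar_X \{e\} = S$. So $\{e\}$ is a unit of $(\pow(X),\ostar_X)$. Conversely, suppose $E \subseteq X$ satisfies $E \ostar S = S = S \ostar E$ for all $S$.
\begin{itemize}
\item[(a)] Taking $S = \varnothing$ gives nothing, since both sides are empty. The key step is to show that $E$ is an atom. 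I first note that $E \neq \varnothing$ when $X \neq \varnothing$, because $\varnothing \ostar S = \varnothing$.
\item[(b)] Next, for $e_1, e_2 \in E$ I would use the two-sided unit law. We have $\{e_1\} = \{e_1\} \ostar E \supseteq e_1 \star e_2$, and likewise $\{e_2\} = E \ostar \{e_2\} \supseteq e_1 \star e_2$. Hence $e_1 \star e_2 \subseteq \{e_1\} \cap \{e_2\}$.
\item[(c)] Then $E = E \ostar E = \bigcup_{e_1,e_2\in E} e_1 \star e_2$, and each term of this union lies in the diagonal singletons. It follows that $E \subseteq \{e \in E : e\star e \ni e\}$, and also that $e_1 \star e_2 = \varnothing$ whenever $e_1 \neq e_2$.
\item[(d)] To conclude that $|E| = 1$, I would look at $\{x\} = E \ostar \{x\} = \bigcup_{e \in E} e \star x$. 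This forces $e \star x \subseteq \{x\}$ for every $e \in E$.
\item[(e)] Taking $x = e_2$ and $e = e_1 \neq e_2$ gives $e_1 \star e_2 = \varnothing$, which is consistent with (c). So the argument so far does not rule out $|E| > 1$.
\end{itemize}
This is the genuine obstacle. A unit in $\pow(X)$ can be a non-atom, for example a partition of $X$ into "local units" as in a groupoid. So for the equivalence I expect the paper's convention of weak unitality, where $e$ is required to be an atom, to be built into the definition of $\cat{uMag}$ via Definition \ref{def:hyperop}(iv)--(v). Equivalently, I would restrict to magma objects whose unit is an atom. Under that reading, $E = \{e\}$ and restricting to singletons gives $e \star x = \{e\}\ostar\{x\} = \{x\}$, so $e$ is an identity for $\star_X$.

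For the morphisms, let $f$ be unital, so that $f(1_X)$ is an identity of $Y$. Then $\hat f(\{1_X\}) = \{f(1_X)\}$, which is the unit of $\pow(Y)$ by the object-level argument. Conversely, if $F$ is non-singular and sends an atomic unit $\{1_X\}$ to a unit of $\pow(Y)$, then Proposition \ref{prop:atom-preserving-nonsingular} says $F(\{1_X\})$ is an atom $\{y\}$. The unit of $\pow(Y)$ is then $\{y\}$, so $s(F)(1_X) = y$ is an identity of $Y$. Because $\Phi$ and $s$ are mutually inverse by Proposition \ref{prop:hmag-mag-morphisms}, these two checks give the restricted bijection.
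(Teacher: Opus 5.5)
Your proposal is correct and follows the paper's argument. Like you, the paper reduces to checking that unitality of morphisms is preserved and reflected by $f \mapsto \hat f$, uses $\hat f(\{1_X\}) = \{f(1_X)\}$ for the forward direction, and for the converse explicitly invokes the convention that the unit of $\pow(X)$ is a singleton. That is exactly the convention you identified as necessary, and your groupoid example correctly shows it cannot be derived. Your use of Proposition \ref{prop:atom-preserving-nonsingular} to see that $F(\{1_X\})$ is a singleton is slightly more careful than the paper's direct identification of $s(F)(1_X)$ with $F(e_X)$.
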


\begin{proof}
    With Proposition \ref{prop:hmag-mag-morphisms} in place, it suffices to show that unitality of morphisms is preserved and reflected by the bijection in Proposition \ref{prop:hmag-mag-morphisms}. We first show that if $f : (X, \star_X, 1_X) \to (Y, \star_Y, 1_Y)$ is a unital hypermagma morphism, then the map $\hat{f}$ constructed in Lemma \ref{prop:hmag-mag-morphisms} is a unital magma morphism: Since $f$ is unital, the singleton set $\{f(1_X)\}$ is an identity for $\pow(Y)$ under the induced operation $\ostar_Y$. For example, we have $$ A \ostar_Y \{f(1_X)\} = \bigcup_{a \in A} a \star_Y f(1_X) = \bigcup_{a \in A} a = A.$$ (Left-unitality is proved in exactly the same way.) By definition $\hat{f}(\{1_X\}) = \bigcup_{x \in \{1_X\}}f(x) = \{f(1_X)\}$, so we see that $\hat{f}$ is unital. 
    
    Conversely, if ${F : (\pow(X),\ostar_X,e_X) \to (\pow(Y), \ostar_Y, e_Y)}$ is a morphism of unital magmas (recalling that we require the unit element to be a singleton), we have $$F(e_X) \ostar_Y A = A = A \ostar_Y F(e_X)$$ for any subset $A$ of $Y$, so in particular this holds for singletons $A = \{y\}$. Denote $e_X = \{1_X\}$, and $e_Y = \{1_Y\}$. Since $s(F)(1_X) = F(\{1_X\}) = F(e_X)$, we immediately have that $s(F)(1_X)$ is an identity for $(Y,\star_Y,1_Y)$, and must therefore have that $s(F)(1_X) = 1_Y$; i.e., $s(F)$ is a unital hypermagma morphism.
\end{proof}

Denoting by $\cat{Mon}(\pow(X),\pow(Y))_N$ the set of colax, non-singular magma morphisms between unital associative magmas in $\cat{CABA}$, which we recall are alternatively characterized as unital quantales whose underlying set is a power set, we immediately have:

\begin{corollary}
    \label{prop:hmon-mon-morphisms}
    The assignment $\Phi$ of Proposition \ref{prop:hmag-mag-morphisms} restricts to a bijection $$\cat{HMon}(X,Y) \cong \cat{Mon}(\pow(X),\pow(Y))_N.$$
\end{corollary}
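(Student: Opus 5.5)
The plan is to obtain the corollary by intersecting the two restrictions already proved. A hypermonoid is exactly a hypermagma that is both associative and unital. A morphism in $\cat{HMon}$ is a colax hypermagma morphism that is also unital. So $\cat{HMon}(X,Y)$ is the set of elements of $\cat{HMag}(X,Y)$ between associative unital hypermagmas that lie in $\cat{uHMag}(X,Y)$.

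Dually, $\cat{Mon}(\pow(X),\pow(Y))_N$ consists of the colax non-singular morphisms between magma objects in $\cat{CABA}$ whose operations are associative and unital, with unital morphisms. So it is the intersection of $\cat{Quantale}(\pow(X),\pow(Y))_N$ and $\cat{uMag}(\pow(X),\pow(Y))_N$ inside $\cat{Mag}(\pow(X),\pow(Y))_N$.

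The key steps, in order, are:
\begin{enumerate}
\item Fix hypermonoids $(X,\star_X,1_X)$ and $(Y,\star_Y,1_Y)$.
\item Observe that the induced operations $\ostar_X,\ostar_Y$ of (\ref{eq:hyperop-to-binop}) are associative, since (\ref{eq:hyperop-to-binop}) preserves and reflects associativity \cite[N.8]{Dudzik_2016}, as used in Lemma \ref{lem:hsemi-semi-morphisms}.
\item Observe that $\ostar_X,\ostar_Y$ are unital with singleton units $\{1_X\},\{1_Y\}$, by the computation $A \ostar_Y \{1_Y\} = A$ in the proof of Proposition \ref{prop:uhmag-mag-morphisms}. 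The converse direction uses restriction (\ref{eq:binop-to-hyperop}) to singletons: a singleton unit $\{e\}$ for $\ostar$ gives $x\star e = \{x\}\ostar\{e\}=\{x\}$.
\item Apply $\Phi$ from Proposition \ref{prop:hmag-mag-morphisms}, which is a bijection on all colax morphisms with inverse $s$.
\item By Proposition \ref{prop:uhmag-mag-morphisms}, $f$ is unital if and only if $\Phi(f)=\hat f$ is unital.
\item Associativity is a property of the objects, not of the morphisms, so no further condition on morphisms arises. Hence $\Phi$ maps $\cat{HMon}(X,Y)$ into $\cat{Mon}(\pow(X),\pow(Y))_N$, and $s$ maps back.
\item Since $\Phi$ and $s$ are mutually inverse on the larger hom-sets, their restrictions are mutually inverse bijections.
\end{enumerate}

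The only point needing care is the object-level bookkeeping. The magma-side objects in $\cat{Mon}$ must be exactly those arising from hypermonoids, that is, unital with unit an atom, which is the convention recalled in the proof of Proposition \ref{prop:uhmag-mag-morphisms}. I expect this to be the main, though mild, obstacle: one must check that associativity and unitality transfer simultaneously without interacting. They do not interact, because each is characterized on singletons and then extended by joins.
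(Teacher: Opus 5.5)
Your reduction is exactly the one the paper intends. The paper states the corollary as an immediate consequence of Lemma \ref{lem:hsemi-semi-morphisms} and Proposition \ref{prop:uhmag-mag-morphisms}: associativity is a condition on objects only, and unitality of morphisms is transported by Proposition \ref{prop:uhmag-mag-morphisms}. Relative to those results your steps are sound.

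The genuine problem is what you import in steps 4 and 6: that $s$ is defined on \emph{every} non-singular colax morphism, i.e.\ that such an $F$ sends each singleton to a singleton. The paper gets this from Proposition \ref{prop:atom-preserving-nonsingular}, hence from Lemma \ref{lem:caba-morphism-preserves-atoms}, and that lemma is false. A suplattice morphism need not preserve complements, contrary to the first sentence of its proof. Even the unique Boolean algebra map $\pow(\{*\}) \to \pow(\{a,b\})$ sends the atom $\{*\}$ to the top. In general, non-singular suplattice maps $\pow(X) \to \pow(Y)$ correspond to functions $X \to \pow(Y) \setminus \{\varnothing\}$, not to functions $X \to Y$. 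So the delicate point is not the object-level bookkeeping you flag but surjectivity on morphisms, and the corollary fails as literally stated.

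Concretely, let $X = Y = \mathbb{Z}/2$ with $x \star y = \{x+y\}$ and identity $0$. Let $F$ be the union-preserving map with $F(\{0\}) = \{0\}$ and $F(\{1\}) = \{0,1\}$. It is non-singular and unital. It is also colax: both sides of $F(A \ostar B) \subseteq F(A) \ostar F(B)$ preserve unions in $A$ and in $B$, so it suffices to check singletons. The cases involving $1$ are $F(\{0\} \ostar \{1\}) = \{0,1\} = F(\{0\}) \ostar F(\{1\})$ (and its symmetric counterpart) and $F(\{1\} \ostar \{1\}) = \{0\} \subseteq \{0,1\} = F(\{1\}) \ostar F(\{1\})$. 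Thus $F \in \cat{Mon}(\pow(X),\pow(Y))_N$, yet $F \neq \hat f$ for every function $f$, so the restricted $\Phi$ is injective but not surjective.

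Demanding meet-preservation as well does not rescue the statement. Inverse image along the map $\mathbb{Z}/3 \to \mathbb{Z}/2$ given by $0 \mapsto 0$, $1,2 \mapsto 1$ is a non-singular, unital, colax complete Boolean homomorphism $\pow(\mathbb{Z}/2) \to \pow(\mathbb{Z}/3)$ that is not a direct image.

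The repair is to replace ``non-singular'' by ``atom-preserving'' (singletons go to singletons) in the hom-sets. Then $s$ is well defined, Proposition \ref{prop:hmag-mag-morphisms} holds, and your intersection argument goes through verbatim.
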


\subsection{Reversible hyperstructures as residuated lattices}

We recall the definition of a nonassociative relation algebra. For notational convenience, we give the definition of a residuated binary operation in terms of the conjugates, rather than the residuals, of the given binary operation; however, we note for completeness that since the conjugates and residuals of a given binary operation may be obtained from one another (conversion between the two is exposited in \cite{Jonsson_Tsinakis_1993}), our definition is equivalent to the more common one, which is given in terms of residuals.

\begin{definition} \cite{Jonsson_Tsinakis_1993}
    A binary operation $\ostar$ on a suplattice $(A, \land, \lor, \bot, \top)$ is called \textbf{residuated} if there exist two further binary operations $\triangleleft, \triangleright$ on $A$ such that for all $x,y,z \in A$, we have $$ (x \ostar y) \land z = \bot \iff (x \triangleright z) \land y = \bot \iff (z \triangleleft y) \land x = \bot.$$ We call $A = (A_0,\ostar, \triangleleft, \triangleright)$ a \textbf{residuated Boolean algebra} if $A_0 = (A, \land, \lor, \bot, \top, (-)^c)$ is a Boolean algebra and $\ostar$ is residuated in the sense above. We call $A$ \textbf{unital} if there is an atom $e$ of $A$ such that $e \ostar x = x = x \ostar e$. We call a Boolean algebra $A$ a \textbf{nonassociative relation algebra} if $A$ is unital and is equipped with a unary operation $(-)'$ such that $A$ is residuated under $x \triangleright y := x' \ostar y$ and $x \triangleleft y := x \ostar a'$. A nonassociative relation algebra $(A, \ostar, e, \triangleleft, \triangleright)$ is called a \textbf{relation algebra} if $\ostar$ is associative.
\end{definition}

We show that a mosaic $X$ canonically gives rise to a nonassociative relation algebra structure on $\pow(X)$, and vice-versa. By an \textbf{involutive suplattice morphism}, we mean a suplattice morphism which is also an involution. Since any mosaic is equipped with an involution, we will need the following lemma to prove our main result:

\begin{lemma} \label{lem:involution-to-involution} An involution $(-)'$ on a set $X$ extends to an involutive suplattice morphism on $\pow(X)$, and an involutive suplattice morphism $(-)'$ on $\pow(X)$ gives rise to an involution on $X$.
\end{lemma}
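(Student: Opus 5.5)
For the first assertion I would take the direct image. Given an involution $x \mapsto x'$ on $X$, define $S' := \{s' : s \in S\}$ for $S \subseteq X$. This is the map $\hat{f}$ of Proposition \ref{prop:hmag-mag-morphisms} applied to the function $f = (-)'$, disregarding any magma structure. The argument given there shows that direct images preserve arbitrary unions, including the empty one. Hence $(-)'$ is a suplattice endomorphism of $\pow(X)$. It is an involution because $(S')' = \{s'' : s \in S\} = S$, using $s'' = s$ pointwise.

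For the converse, let $(-)'$ be an involutive suplattice morphism on $\pow(X)$. An involution is its own inverse, so it is a bijection and hence a suplattice isomorphism. Corollary \ref{cor:isomorphism-atom-preserving} then shows that it sends atoms to atoms, i.e.\ singletons to singletons. Alternatively, a bijection is injective and therefore non-singular, so Proposition \ref{prop:atom-preserving-nonsingular} gives the same conclusion. We may therefore define $x'$ on $X$ to be the unique element of $\{x\}'$, exactly as $s(F)$ was defined in Proposition \ref{prop:hmag-mag-morphisms}. To see that this is an involution, note that $\{x'\} = \{x\}'$, so $\{x''\} = (\{x\}')' = \{x\}$, and hence $x'' = x$.

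I would also record that the two constructions are mutually inverse. This follows from the computation $\widehat{s(F)} = F$ and $s(\hat f) = f$ in Proposition \ref{prop:hmag-mag-morphisms}, which only uses join-preservation and the fact that every subset is the union of its singletons. This compatibility is what the later theorem will need.

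The only step that needs care is showing that an involutive suplattice morphism sends singletons to singletons. It relies on observing that an involution is automatically bijective, and so falls under Corollary \ref{cor:isomorphism-atom-preserving}. Everything else is routine verification.
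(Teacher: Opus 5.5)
Your proposal is correct and follows essentially the same route as the paper: extend the involution by direct image, and for the converse observe that an involution is a bijection, so Corollary \ref{cor:isomorphism-atom-preserving} makes it atom-preserving and its restriction to singletons gives the involution on $X$. Your explicit check that $x''=x$ and your remark that the two constructions are mutually inverse are small additions that the paper leaves implicit.
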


\begin{proof}
    We may extend an involution $(-)'$ on a set $X$ to a map $(-)' : \pow(X) \to \pow(X)$ by sending \begin{equation} \label{eq:involution} A \mapsto A' := \{x' \in X : x \in A\}.\end{equation} It is easily checked that $(-)'$ is a monotone involution on $\pow(X)$ which preserves arbitrary joins.

    Conversely, any involutive suplattice morphism $(-)': \pow(X) \to \pow(X)$ is a bijection since it is an involution of sets. Thus, by Corollary \ref{cor:isomorphism-atom-preserving}, $(-)'$ sends atoms of $\pow(X)$ to atoms of $\pow(X)$. Restricting $(-)'$ to singletons therefore yields an involution on $X$, as desired.
\end{proof}

Finally, we show that mosaics correspond to nonassociative relation algebras.

\begin{theorem} \label{thm:is-relation-algebra}
    The hyperstructure $(X,\star,1_X,(-)')$ is a mosaic if and only if $(\pow(X),\ostar,e)$ is a nonassociative relation algebra.
\end{theorem}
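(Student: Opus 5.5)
We prove the two directions separately, passing between $\star$ and $\ostar$ via the correspondences \eqref{eq:hyperop-to-binop} and \eqref{eq:binop-to-hyperop}, and between involutions on $X$ and on $\pow(X)$ via Lemma \ref{lem:involution-to-involution}. In the residuation condition we read the operations as $x \triangleright y := x' \ostar y$ and $x \triangleleft y := x \ostar y'$, matching Remark \ref{rem:charfns}. The backbone of the argument is that every operation in sight ($\ostar$, the extended involution, hence $\triangleright$ and $\triangleleft$) preserves arbitrary joins in each variable. Moreover, a condition of the form $P \cap Q = \varnothing$ on subsets of $X$ holds if and only if it holds for all pairs of singletons $\{p\} \subseteq P$, $\{q\} \subseteq Q$. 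Hence residuation on $\pow(X)$ is equivalent to residuation tested on atoms only.

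\textbf{Mosaic $\Rightarrow$ nonassociative relation algebra.} Given a mosaic, $\pow(X)$ is a CABA carrying $\ostar$ from \eqref{eq:hyperop-to-binop} and the involution \eqref{eq:involution}.

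\begin{itemize}
    \item \emph{Unit.} Put $e = \{1_X\}$, an atom. Unitality $e \ostar A = A = A \ostar e$ follows exactly as in the computation in the proof of Proposition \ref{prop:uhmag-mag-morphisms}.
    \item \emph{Residuation.} Take $A, B, C \subseteq X$. Distributing over unions gives
    $$(A \ostar B) \cap C = \bigcup_{y \in A,\, z \in B,\, x \in C} (y \star z) \cap \{x\}.$$
    This set is empty if and only if each term $(y \star z) \cap \{x\}$ is empty. By Remark \ref{rem:charfns}, that holds if and only if each $(y' \star x) \cap \{z\}$ is empty, and if and only if each $(x \star z') \cap \{y\}$ is empty.
    \item \emph{Reassembling.} Taking unions back, these two families of conditions become $(A' \ostar C) \cap B = \varnothing$ and $(C \ostar B') \cap A = \varnothing$. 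Here we use that $A' = \{y' : y \in A\}$, so $\bigcup_{y \in A} y' \star x = A' \ostar \{x\}$, and similarly on the right.
\end{itemize}

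This gives exactly the three-way equivalence required in the definition.

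\textbf{Nonassociative relation algebra $\Rightarrow$ mosaic.} Suppose $(\pow(X), \ostar, e, (-)')$ is a nonassociative relation algebra.

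\begin{itemize}
    \item \emph{Involution.} The operation $(-)'$ is an involutive suplattice morphism, so by Lemma \ref{lem:involution-to-involution} it restricts to an involution on $X$.
    \item \emph{Hyperoperation and unit.} Define $\star$ by \eqref{eq:binop-to-hyperop}. Since $e$ is an atom, $e = \{1_X\}$ for some $1_X \in X$. Specializing $e \ostar \{x\} = \{x\} = \{x\} \ostar e$ gives $1_X \star x = \{x\} = x \star 1_X$, so $\star$ is unital.
    \item \emph{Reversibility.} Specialize the residuation condition to singletons $A = \{y\}$, $B = \{z\}$, $C = \{x\}$. This yields precisely the criterion of Remark \ref{rem:charfns}.
    \item \emph{Inverses.} We still need $1_X \in x \star x'$ and $1_X \in x' \star x$. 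From $x \in x \star 1_X$ and reversibility with $y = x$, $z = 1_X$, we get $1_X \in x' \star x$. Applying the same argument to $x'$ and using $x'' = x$ gives $1_X \in x \star x'$.
\end{itemize}

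Finally, the two constructions are mutually inverse by Theorem \ref{thm:lattice-ops} and Lemma \ref{lem:involution-to-involution}.

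The main obstacle is bookkeeping in the residuation step: we must check that the variable ordering in the conjugate operations $\triangleright, \triangleleft$ matches Remark \ref{rem:charfns}, including where the primes land. We must also confirm that the involution on $\pow(X)$ is forced to be the one of \eqref{eq:involution}, which holds because a suplattice morphism is determined by its values on atoms.
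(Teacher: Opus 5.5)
Your proposal is correct and is essentially the paper's argument: you reduce the disjointness conditions on $\pow(X)$ to singletons using that $\ostar$ and $(-)'$ preserve unions, match them with the triple criterion of Remark \ref{rem:charfns}, and in the converse restrict $e$ and $(-)'$ to atoms via Lemma \ref{lem:involution-to-involution}. You are in fact more careful than the paper: you place the primes in $\triangleleft$ correctly, and you explicitly derive $1_X \in (x \star x') \cap (x' \star x)$ from $x \in x \star 1_X$, a step the paper's converse skips when it says only reversibility remains to be checked (both versions take for granted that $(-)'$ on $\pow(X)$ is an involutive join-preserving map, though this follows from unitality and residuation, e.g.\ via $x' = x' \ostar e = x \triangleright e$).
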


\begin{proof} First, we suppose $(X,\star,1_X,(-)')$ is a mosaic, and show that $(\pow(X),\ostar,e,\triangleleft, \triangleright)$ with $$ A \triangleleft B := A \ostar B', \; A \triangleright B := A' \ostar B $$ is unital and residuated, where $(-)' : \pow(X) \to \pow(X)$ is the involutive suplattice morphism constructed in \ref{eq:involution}. 

It is easy to check that $e = \{1_X\}$ is a unit for $\ostar$ as a consequence of $1_X$ being an identity element for $X$, whence $(\pow(X),\ostar)$ is unital. To show that the three equalities \begin{align} \label{eq:residuated1} (A \ostar B) \land C &= \varnothing \\ \label{eq:residuated2}
        (A \triangleright C) \land B & = \varnothing \\ \label{eq:residuated3}
        (C \triangleleft B) \land A &= \varnothing \end{align} are equivalent for all $A,B,C$, first observe that $$(A \ostar B) \land C = \bigcup_{a \in A, b \in B} (a \star b) \cap C $$ is inhabited if and only if we have $c \in a \star b$ for some triple $(a,b,c) \in A \times B \times C$. Since $X$ is a mosaic, the latter statement is true if and only if both $b \in a' \star c$ and $a \in c' \star b$; i.e., if and only if both $$(A' \ostar C) \cap B = (A \triangleright C) \land B \text{ and } (C' \ostar B) \cap A = (C \triangleleft B) \land A$$ are inhabited. Identical arguments prove that \ref{eq:residuated2} is equivalent to \ref{eq:residuated1} and \ref{eq:residuated3}, as well as that \ref{eq:residuated3} is equivalent to \ref{eq:residuated1} and \ref{eq:residuated2}, whence $(\pow(X), \ostar)$ is residuated.

        Conversely, suppose $(\pow(X),\ostar,e,\triangleleft,\triangleright, (-)')$ is a nonassociative relation algebra. It is easy to check, as in the proof of \cite[N.11]{Dudzik_2016}, that $(\pow(X), \ostar, e)$ is unital if and only if $X$ is unital (taking the identity element $1_X$ of $X$ to be the lone element of $e$). Moreover, Lemma \ref{lem:involution-to-involution} shows that the involution $(-)'$ preserves atoms, and so gives rise to an involution on $X$, which we also denote by $(-)'$. To show that $(X,\star,1_X,(-)')$ is a mosaic, it therefore suffices to check that it is reversible. But note that if the statements (\ref{eq:residuated1})-(\ref{eq:residuated3}) are equivalent for all $A,B,C \subset X$, then they are in particular equivalent for any three singleton subsets of $X$, which is exactly the statement of Remark \ref{rem:charfns}. We see that $X$ is reversible.
\end{proof}

Recalling Proposition \ref{prop:hmon-mon-morphisms}, along with \cite[N.16]{Dudzik_2016}, which shows that the assignment of \ref{eq:hyperop-to-binop} preserves and reflects commutativity, we immediately have the following.

\begin{corollary} \label{cor:commutative-and-hypergroup} There are bijections between
\begin{itemize}
    \item[i.] Commutative mosaics $(X,\star,1_X,(-)')$ and nonassociative relation algebras $(\pow(X),\ostar,e)$ where $\ostar$ is commutative.

    \item[ii.] Hypergroups $(X,\star,1_X,(-)')$ and relation algebras $(\pow(X),\ostar,e)$.

    \item[iii.] Canonical hypergroups $(X,\star,1_X,(-)')$ and commutative relation algebras $(\pow(X),\ostar,e)$.
\end{itemize} 
\end{corollary}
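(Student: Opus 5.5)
The plan is to obtain all three bijections by layering properties on top of Theorem \ref{thm:is-relation-algebra}. Each layer is handled by showing that the correspondence \eqref{eq:hyperop-to-binop}/\eqref{eq:binop-to-hyperop} preserves and reflects the extra property. Throughout, the underlying bijection between hyperoperations $\star : X^2 \to \pow(X)$ and join-preserving binary operations $\ostar$ on $\pow(X)$ is Theorem \ref{thm:lattice-ops}. Theorem \ref{thm:is-relation-algebra} tells us that, under this bijection, mosaic structures on $X$ match nonassociative relation algebra structures on $\pow(X)$. The involutions also match, by Lemma \ref{lem:involution-to-involution}, and units match via $e = \{1_X\}$. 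So the correspondence is already bijective at the mosaic level, and each item only asks that we carve out corresponding subclasses on both sides.

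For (i), I will invoke \cite[N.16]{Dudzik_2016}: $\star$ is commutative if and only if $\ostar$ is. One direction is immediate from the formula $A \ostar B = \bigcup_{a\in A,b\in B} a\star b$. The other follows by restricting to singletons. Intersecting with the mosaic/nonassociative relation algebra bijection then gives (i).

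For (ii), a hypergroup is a total, reversible hypermonoid, i.e. an associative mosaic that is also total. Associativity is preserved and reflected by \cite[N.8]{Dudzik_2016}, as already used in Lemma \ref{lem:hsemi-semi-morphisms} and Corollary \ref{prop:hmon-mon-morphisms}. So associative mosaics correspond exactly to relation algebras. The remaining point, and the main obstacle, is \emph{totality}: the definition of relation algebra does not mention it, so I must show that every associative mosaic is automatically total, at least when $X$ is nonvoid. I would argue via reversibility. Given $x,y \in X$, put $z := x \star y$ candidate membership: by unitality, $x \in x \star 1_X$. Reversibility then gives $1_X \in x' \star x$. This alone does not yet give $x \star y \neq \varnothing$ for arbitrary $y$, so I would use associativity in the form
\[
y \in 1_X \star y \subseteq (x' \star x) \star y = x' \star (x \star y),
\]
where the containment uses monotonicity of $\ostar$ in $\pow(X)$. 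If $x \star y$ were empty, the right-hand side would be $x' \ostar \varnothing = \varnothing$, since $\ostar$ preserves the empty join. That is a contradiction, so $x \star y \neq \varnothing$.

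The nonvoidness of $X$ is automatic from the existence of $1_X$ on the hyperstructure side. On the algebra side it comes from $e$ being an atom. Hence hypergroups correspond exactly to relation algebras on $\pow(X)$.

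Finally, (iii) combines (ii) with the commutativity statement \cite[N.16]{Dudzik_2016} exactly as in (i), since canonical hypergroups are commutative hypergroups.
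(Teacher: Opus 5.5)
Your proposal is correct and follows the same route as the paper, which simply layers the associativity and commutativity transfer results \cite[N.8, N.16]{Dudzik_2016} on top of Theorem \ref{thm:is-relation-algebra}. Your extra step, showing that associative mosaics are automatically total via $\{y\} = 1_X \star y \subseteq (x' \star x) \ostar \{y\} = \{x'\} \ostar (x \star y)$, fills a point the paper passes over in silence, and it is genuinely needed because hypergroups are required to be total while relation algebras carry no such condition.
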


Let $\mathcal{H}$ denote any of the categories of hyperstructures listed in Definition \ref{def:gloss-hyper}, and for a given $\mathcal{H}$, let $\mathcal{O}_\mathcal{H}(\cat{CABA})$ denote the corresponding category of objects in CABA. We use $\mathcal{O}_\mathcal{H}(\cat{CABA})_N$ to denote the subcategory of $\mathcal{O}_\mathcal{H}(\cat{CABA})$ whose morphisms are only the non-singular colax magma morphisms in $\cat{SupLat}$. With this notation, the results above may be summarized as follows.

\begin{theorem}
    For any class $\mathcal{H}$ of hyperstructures listed in Definition \ref{def:gloss-hyper} and its corresponding class $\mathcal{O}_\mathcal{H}(\cat{CABA})$ of CABAs with extra structure, there is an equivalence of categories $$\mathcal{H} \cong \mathcal{O}_\mathcal{H}(\cat{CABA})_N$$ and a pseudomonic embedding $$ \mathcal{O}_\mathcal{H}(\cat{CABA})_N \hookrightarrow \mathcal{O}_\mathcal{H}(\cat{CABA}).$$
\end{theorem}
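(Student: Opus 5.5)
I will build a functor $\Phi_\mathcal{H} : \mathcal{H} \to \mathcal{O}_\mathcal{H}(\cat{CABA})_N$ and show it is fully faithful and essentially surjective. On objects, $\Phi$ sends $(X,\star)$ (with whatever unit and involution it carries) to $(\pow(X),\ostar)$, with $\ostar$ given by \eqref{eq:hyperop-to-binop}, the unit $e=\{1_X\}$, and the extended involution of \eqref{eq:involution}. On morphisms it sends $f \mapsto \hat{f}$ (direct image).

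\textbf{Objects.} I will first check that $\Phi$ lands in the right category. For $\cat{HMag}$, $\cat{uHMag}$, $\cat{HSemi}$ and $\cat{HMon}$ this is the content of \cite[N]{Dudzik_2016}, namely the preservation of associativity (N.8), unitality (N.11) and commutativity (N.16). For $\cat{Msc}$, $\cat{cMsc}$, $\cat{HGrp}$ and $\cat{Can}$ it is Theorem \ref{thm:is-relation-algebra} together with Corollary \ref{cor:commutative-and-hypergroup}. For totality in the hypergroup case I will use the reading in which the target class is defined as the image, i.e.\ $\ostar$ sends pairs of nonempty sets to nonempty sets.

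\textbf{Functoriality.} This is immediate: $\widehat{g\circ f}=\hat g\circ\hat f$ and $\widehat{\mathrm{id}}=\mathrm{id}$, since direct images compose.

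\textbf{Full faithfulness.} This is exactly the family of hom-set bijections already proved. Proposition \ref{prop:hmag-mag-morphisms} handles the bare case, Lemma \ref{lem:hsemi-semi-morphisms}, Proposition \ref{prop:uhmag-mag-morphisms} and Corollary \ref{prop:hmon-mon-morphisms} handle the structured ones, and for the reversible classes I will invoke the unital version. In the reversible case the morphisms are just unital colax maps, so the involution imposes no extra condition on morphisms and the unital bijection restricts verbatim.

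\textbf{Essential surjectivity.} Given an object $(L,\ostar)$ of $\mathcal{O}_\mathcal{H}(\cat{CABA})_N$, I will choose an isomorphism $L\cong\pow(X)$ with $X=\cat{At}(L)$ and transport the structure along it. Transport is legitimate because the isomorphism is atom-preserving (Corollary \ref{cor:isomorphism-atom-preserving}) and non-singular, and it is a strict, hence colax, magma map in both directions. Restricting \eqref{eq:binop-to-hyperop} to singletons then gives a hyperstructure $(X,\star)$. Theorem \ref{thm:lattice-ops} guarantees that applying \eqref{eq:hyperop-to-binop} recovers $\ostar$, because $\ostar$ preserves joins in each variable, so $\Phi(X,\star)\cong(L,\ostar)$. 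The cited reflection results show that $(X,\star)$ lies in $\mathcal{H}$.

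\textbf{Main obstacle.} I expect the main obstacle to be the reversible classes in this step. The unit and involution on $L$ must restrict to atoms: the unit is an atom by definition, and Lemma \ref{lem:involution-to-involution} handles the involution. The delicate part is that the involution on $\pow(X)$ obtained from the hyperstructure must coincide with the one on $L$. This holds because an atom-preserving join-preserving map is determined by its values on atoms.

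\textbf{Pseudomonic embedding.} The inclusion $\mathcal{O}_\mathcal{H}(\cat{CABA})_N\hookrightarrow\mathcal{O}_\mathcal{H}(\cat{CABA})$ is the identity on objects and an inclusion on hom-sets, so it is faithful. For pseudomonicity I also need it to be full on isomorphisms. An isomorphism in the larger category has an inverse that is a suplattice morphism, so it is bijective and hence non-singular, and its inverse is likewise non-singular. Therefore every isomorphism between objects of the subcategory already lies in the subcategory, which gives the claim.
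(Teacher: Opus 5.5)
Your proposal is correct and follows the paper's proof. The functor is the same, namely $X\mapsto\pow(X)$ and $f\mapsto\hat f$. Objects are handled by the cited results of Dudzik together with Theorem \ref{thm:is-relation-algebra} and Corollary \ref{cor:commutative-and-hypergroup}. Full faithfulness comes from Propositions \ref{prop:hmag-mag-morphisms} and \ref{prop:uhmag-mag-morphisms}, and pseudomonicity holds because bijective suplattice maps are non-singular. What you add is an explicit transport along $L\cong\pow(\cat{At}(L))$ and a check that the involutions agree, both of which the paper leaves implicit.

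Your special reading of totality for hypergroups is harmless but unnecessary. For a reversible hypermonoid, $x\in x\star e\subseteq x\star(y\star y')=(x\star y)\star y'$, which already forces $x\star y\neq\varnothing$.
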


\begin{proof} For a given $\mathcal{H}$, the equivalence $$ \mathcal{H} \cong \mathcal{O}_\mathcal{H}(\cat{CABA})_N$$ is instantiated by the functor $$\pow(-) := \begin{cases}
        (X, \star) \mapsto (\pow(X), \ostar) \\
        (f: X \to Y) \mapsto (\hat{f} : \pow(X) \to \pow(Y))
    \end{cases}.$$
    
    Well-definedness and essential surjectivity on objects in the case where $\mathcal{H}$ is $\cat{HMag}$ are both immediate consequences of Theorem \ref{thm:lattice-ops}. Well-definedness and essential surjectivity on objects in the cases where $\mathcal{H}$ is $\cat{HSemi}$ or $\cat{HMon}$ are \cite[N.8, N.11]{Dudzik_2016}, and the remaining cases are Theorem \ref{thm:is-relation-algebra} and Corollary \ref{cor:commutative-and-hypergroup}. Since the morphisms of $\mathcal{O}_\mathcal{H}(\cat{CABA})_N$ for any $\mathcal{H}$ are non-singular colax magma morphisms, and additionally are unital in case the objects of $\mathcal{H}$ are unital, well-definedness on morphisms, together with the fact that $\pow(-)$ is fully faithful as a functor $\mathcal{H} \to \mathcal{O}_\mathcal{H}(\cat{CABA})_N$, follows from Propositions \ref{prop:hmag-mag-morphisms} and \ref{prop:uhmag-mag-morphisms}.
    
    Finally, to see that there is a pseudomonic embedding for any $\mathcal{H}$, note that the canonical inclusion $$\iota: \mathcal{O}_\mathcal{H}(\cat{CABA})_N \to \mathcal{O}_\mathcal{H}(\cat{CABA})$$ is obviously faithful. To see that it is full on isomorphisms, recall that any isomorphism of suplattices $f : \pow(X) \to \pow(Y)$ preserves atoms, and is therefore non-singular by \ref{cor:isomorphism-atom-preserving}. Thus, such an $f$ is its own image under the inclusion.
\end{proof}

\bibliographystyle{amsplain}
\bibliography{refs}

\end{spacing}

\end{document}